\title[Derived equivalent non-birational Hilbert schemes]{Derived equivalent Hilbert schemes of points on K3 surfaces which are not birational}
\author{Ciaran Meachan}
\address{School of Mathematics, University of Glasgow, Scotland}
\email{ciaran.meachan@glasgow.ac.uk}
\author{Giovanni Mongardi}
\address{Dipartimento di Matematica, Universit\'a di Bologna, Italia}
\email{giovanni.mongardi2@unibo.it}
\author{K\={o}ta Yoshioka}
\address{Department of Mathematics, Kobe University, Japan}
\email{yoshioka@math.kobe-u.ac.jp}
\begin{document}

\maketitle

\begin{abstract}
We provide a criterion for when Hilbert schemes of points on K3 surfaces are birational. In particular, this allows us to generate a plethora of examples of non-birational Hilbert schemes which are derived equivalent.
\end{abstract}

\section*{Introduction}
The Bondal--Orlov conjecture \cite{bondal2002derived} provides a fundamental bridge between birational geometry and derived categories. It claims that if two varieties with trivial canonical bundle are birational then their bounded derived categories of coherent sheaves are equivalent. Whilst this conjecture is of paramount importance to the algebro-geometric community, it is examples where the converse fails that we are most interested in. The most famous example of this kind is Mukai's derived equivalence \cite{mukai1981duality} between an Abelian variety and its dual. Calabi--Yau examples have been the focus of a recent flurry of articles: \cite{borisov2009pfaffian,kuznetsov2007homological,kapustka2013mirror,addington2015pfaffian,ottem2017counterexample,borisov2017grassmannian,manivel2017double,kapustka2017torelli}, but there were no such examples in the hyperk\"ahler setting until very recently. 
Indeed, the first examples of derived equivalent non-birational hyperk\"ahlers were exhibited in \cite[Theorem B]{addington2016moduli} as certain moduli spaces of torsion sheaves on K3 surfaces. This article complements this discovery with further examples coming from Hilbert schemes of points and, in some sense, completes the investigation initiated by Ploog in \cite{ploog2007equivariant}. 

\subsection*{Various notions of equivalence}

Throughout this article, we will use $\D(X)$ to denote the bounded derived category of coherent sheaves on a smooth complex projective variety $X$. Moreover, we will say that two smooth complex projective varieties $X$ and $Y$ are D-equivalent if we have an equivalence $\D(X)\simeq\D(Y)$. 

Recall that two varieties $X$ and $Y$ are said to be K-equivalent if there exists a birational correspondence $X\xla{\pi_X} Z\xra{\pi_Y} Y$ with $\pi_X^*\omega_X\simeq\pi_Y^*\omega_Y$. The Bondal--Orlov conjecture says $\K\implies\D$. Notice that if the canonical bundles of $X$ and $Y$ are trivial then K-equivalence is the same as birationality.

We will say that two varieties $X$ and $Y$ are H-equivalent if there exists a Hodge isometry $\H^2(X,\bbZ)\simeq_\Hdg\H^2(Y,\bbZ)$, that is, an isomorphism respecting the Hodge structure and the intersection pairing. For a general hyperk\"ahler, Huybrechts \cite[Corollary 4.7]{huybrechts1999compact} shows that $\K\implies\H$. However, Namikawa \cite{namikawa2002counter} showed that there are Abelian surfaces $A$ for which the associated generalised Kummer fourfolds $K_2(A)$ and $K_2(\hat{A})$ are Hodge-equivalent but not birational; this was the first major counterexample to the birational Torelli problem for hyperk\"ahlers. Also, Verbitsky's Torelli theorem \cite[Theorem 7.19]{verbitsky2013torelli} proves that $\H\implies\K$ when the hyperk\"ahler is of $\trm{K3}^{[n]}$-type and $n=p^k+1$ for some prime $p$ and positive integer $k$. However, if $n-1$ is not a prime power then Markman \cite[Lemma 4.11]{markman2010integral} provides examples of Hilbert schemes which are Hodge-equivalent and yet not birational. Taken together, these results illustrate 
the relationship between H-equivalence and K-equivalence of hyperk\"ahlers is quite delicate. That is, while Huybrechts says that $\K\implies\H$, the converse only holds when we impose certain extra conditions.

If $X$ is a hyperk\"ahler of $\trm{K3}^{[n]}$-type then the Markman--Mukai lattice $\tilde\Lambda$ is an extension of the lattice $\H^2(X,\bbZ)$ and the weight-two Hodge structure on $\H^2(X,\bbC)$ with the following properties:
\begin{enumerate}
\item as a lattice, we have $\tilde\Lambda\simeq U^4\oplus E_8(-1)^2$,
\item the orthogonal complement of $\H^2(X,\bbZ)$ in $\tilde\Lambda$ has rank one and is generated by a primitive vector of square $2n-2$,
\item if $X$ is a moduli space $M_S(\vv)$ of sheaves on a K3 surface $S$ with Mukai vector $\vv\in\H^*(S,\bbZ)$ then there is an isomorphism $\tilde\Lambda\simeq\H^*(S,\bbZ)\;;\;\H^2(X,\bbZ)\mapsto \vv^\perp$.
\end{enumerate}

We say that two hyperk\"ahlers $X$ and $Y$ are M-equivalent if there exists a Hodge isometry $\tilde\Lambda_X\simeq_\Hdg\tilde\Lambda_Y$. Markman's Torelli theorem \cite[Corollary 9.9]{markman2011survey} shows that M-equivalence almost implies birationality. More precisely, if $X_1$ and $X_2$ are two hyperk\"ahlers of $\trm{K3}^{[n]}$-type with an M-equivalence $\phi:\tilde\Lambda_1\simeq_\Hdg\tilde\Lambda_2$ then $X_1$ and $X_2$ are birational if and only if $\phi$ maps $\H^2(X_1,\bbZ)$ to $\H^2(X_2,\bbZ)$. Moreover, if $X_1$ and $X_2$ are both moduli spaces of sheaves on K3 surfaces $S_1$ and $S_2$ with Mukai vectors $v_i\in\H^*(S_i,\bbZ)$, $i=1,2$, then we can use property (iii) above to rephrase Markman's Torelli theorem as follows: $X_1$ and $X_2$ are birational if and only if there exists an M-equivalence $\phi:\H^*(S_1,\bbZ)\simeq_\Hdg\H^*(S_2,\bbZ)\;;\;\vv_1\mapsto \vv_2$. In particular, in all our examples of non-birational derived equivalent Hilbert schemes, we have plenty of M-equivalences but none of them preserving $(1,0,1-n)$. It is tempting to speculate that D-equivalence for hyperk\"ahlers is 
implied by M-equivalence: 
\begin{equation}\label{eqn:relations}
\begin{aligned}
\xymatrix{\K \ar@{==>}[r]^-{\text{conj}} \ar@{=>}[d] \ar@{=>}[dr] & \D \\ \H \ar@{=>}[r] & \M \ar@{==>}[u]_-?}
\end{aligned}
\end{equation}
%
%

In the special case of Hilbert schemes of points on K3 surfaces, the implication $\K\implies \D$ was established in \cite[Proposition 10]{ploog2007equivariant}. Halpern-Leistner \cite{halpern-leistner2017KimpliesD} has announced a generalisation of this result to moduli spaces of sheaves on K3 surfaces.
\medskip

\textbf{Acknowledgements}: A very similar result was independently discovered by Shinnosuke Okawa \cite{okawa2018example}. His result only considers the case when Brill--Noether contractions exist. The first author thanks David Ploog for a very helpful and enjoyable discussion. Special thanks also go to Evgeny Shinder and Joe Karmazyn for pointing out the interesting Example \ref{5H13}.

\section{Examples which are D-equivalent but not K-equivalent}

\subsection{
Degree Twelve} \label{sec:deg12}
We work through a specific example in order to demonstrate how certain Hilbert schemes can be derived equivalent and not birational.
\vs

Let $X$ be a complex projective K3 surface with $\Pic(X)=\bbZ[H]$ and $\ww\in\H^*_\alg(X,\bbZ)$ a primitive vector with $\ww^2=0$. Then Mukai \cite{mukai1987moduli} shows that the moduli space $Y=M_H(\ww)$ of Gieseker $H$-stable sheaves is a K3 surface. Moreover, the derived Torelli theorem of Mukai and Orlov \cite{orlov1997equivalences} shows that if there exists a vector $\vv\in\H^*_\alg(X,\bbZ)$ with $(\vv,\ww)=1$\footnote{The condition that $(\vv,\ww)=1$ is equivalent to Mukai's criterion that $\ww=(r,H,s)$ for some integers $r$ and $s$ satisfying $\gcd(r,s)=1$ and $H^2=2rs$.} then there is a universal family $\cE$ on $X\times Y$ which induces a derived equivalence: \[\cF_\cE:\D(X)\xra\sim\D(Y).\] By \cite[Proposition 8]{ploog2007equivariant}, this gives equivalences $\D(X^{[n]})\simeq\D(Y^{[n]})$ for all $n\ge1$. 

\begin{ques}\label{biratQ} 
For which positive integers $n$, are $X^{[n]}$ and $Y^{[n]}$ birational?
\end{ques}

Recall that Oguiso \cite{oguiso2002K3} has shown that the number of Fourier--Mukai partners of a K3 surface $X$ with $\Pic(X)=\bbZ[H]$ and $H^2=2d$ is given by $2^{\rho(d)-1}$, where $\rho(d)$ is the number of prime factors of $d$. Thus, to ensure that $X^{[n]}$ and $Y^{[n]}$ are not all isomorphic, we must have $H^2\ge12$. For simplicity, we choose $H^2=12$. 

Let $X$ be a complex projective K3 surface with $\Pic(X)=\bbZ[H]$ and $H^2=12$. Since $\ww=(2,H,3)$ is an isotropic vector, we have another K3 surface $Y=M_H(\ww)$. Moreover, since $\vv=(1,H,4)$ is a vector such that $(\vv,\ww)=1$ (or $\gcd(2,3)=1$), we have a universal family $\cE$ and a derived equivalence as above. Now, the proof of \cite[Theorem 2.4]{stellari2004some} shows that $\H^2(X,\bbZ)\not\simeq_\Hdg\H^2(Y,\bbZ)$ and so the K3 surfaces $X$ and $Y$ are not birational. That is, when $n=1$ the answer to our Question \ref{biratQ} above is no. However, when $n=2,3,4$ the answer to Question \ref{biratQ} is yes! 

To see this, we use \cite[Lemma 7.2]{yoshioka2001moduli} (with $d_0,d_1,l=1$, $r_0=2$ and $k=3$) which shows that the cohomological Fourier--Mukai transform acts as follows:
\[\cF_\cE^\H:\H^*(X,\bbZ)\to\H^*(Y,\bbZ)\;;\;\left\{\begin{array}{ccccc}(1,0,0)&\mapsto&(3,\hat{H},2)\\(0,H,0)&\mapsto&(12,5\hat{H},12)\\(0,0,1)&\mapsto&(2,\hat{H},3)&=&\ww,\end{array}\right.\] 
where $\hat{H}$ is an ample divisor class on $Y$. In particular, since $\cF_\cE^\H$ is a Hodge isometry and $X^{[2]}\simeq M_X(1,0,-1)$, we have 
\[\cF_\cE^\H(1,0,-1)=\cF_\cE^\H(1,0,0)-\cF_\cE^\H(0,0,1)=(1,0,-1),\]
and hence a birational map $\cF_\cE:
X^{[2]}\dra Y^{[2]}$. In this case, \cite[Theorem 7.6]{yoshioka2001moduli} shows that $\cF_\cE$ is actually an isomorphism! See \cite[Example 7.2]{yoshioka2001moduli} for details.

Similarly, when $n=3$ we have $X^{[3]}\simeq M_X(1,0,-2)\simeq M_X(1,-H,4)$, where the second isomorphism is given twisting by $\cO_X(H)$. Thus, we see that 
\[\cF_\cE^\H(1,-H,4)=\cF_\cE^\H(1,0,0)-\cF_\cE^\H(0,H,0)+4\cF_\cE^\H(0,0,1)=-(1,0,-2),\]
and hence $\cF_\cE[1]:M_X(1,-H,4)\dra M_Y(1,0,-2)$ is a birational map $X^{[3]}\dra Y^{[3]}$. 

For $n=4$, 
$X^{[4]}\simeq M_X(1,-H,3)$, and it is enough, by \cite[Corollary 1.3]{bayer2013mmp}, to find an equivalence $\Phi:\D(X)\xra\sim\D(Y)$ such that $\Phi^\H$ maps $(1,-H,3)$ to $(1,\hat{H},3)$. If we set $\bbD_X:=\cHom(\_,\omega_X)[2]$ to be the dualising functor and $T_{\cO_X}$ to be the spherical twist around $\cO_X$ then $\Phi:=T_{\cO_Y}\circ\bbD_Y\circ\cF_\cE$ does the job. Indeed, since $T_{\cO_X}^\H$ sends a class $(r,c,s)$ to $(-s,c,-r)$, we can check that we have
\[(1,-H,3)\xra{\cF_\cE^\H}(-3,-\hat{H},-1)\xra{\bbD_Y^\H}(-3,\hat{H},-1)\xra{T_{\cO_X}^\H}(1,\hat{H},3).\]

For $n=5$, we 
first note that $X^{[5]}\simeq M_X(1,-H,2)$ is birational to $M_X(2,H,1)$ 
and then observe that the second moduli space has a Li--Gieseker--Uhlenbeck contraction. Indeed, the Hodge isometry $T_{\cO_X}^\H[1]$ sends $(1,-H,2)$ to $(2,H,1)$ and the Mukai vector $\ww=(0,0,-1)$ is an isotropic class which pairs with $(2,H,1)$ to give 2. 
Now, if $X^{[5]}$ and $Y^{[5]}$ are birational then we have an induced map between their second integral cohomology groups. Moreover, since a birational map preserves the movable cone (cf. \cite[Section 6]{markman2011survey}), this map can either send the exceptional divisor of the Hilbert--Chow (HC) contraction 
to itself or to the exceptional divisor of the Li--Gieseker--Uhlenbeck (LGU) contraction. 
In particular, if HC were mapped to LGU then we would contradict the fact that a birational map between Hilbert schemes necessarily sends primitive classes to primitive classes, whereas if HC were mapped to HC then the orthogonal complements must be Hodge-isometric as well, i.e. $\H^2(X,\bbZ)\simeq_\Hdg\H^2(Y,\bbZ)$, and hence the underlying K3s would be isomorphic which they are not. Thus, by contradiction, $X^{[5]}$ and $Y^{[5]}$ cannot be birational.

The key thing about the previous argument is that the movable cone of $X^{[5]}$ has two \emph{different} boundary walls. For a Picard rank one K3 surface $X$, the movable cone of the Hilbert scheme $X^{[n]}$ has two boundary walls. At least one of these boundaries is a Hilbert--Chow wall, and so we are essentially looking to see if the other wall is a different type: Brill--Noether (BN), Li--Gieseker--Uhlenbeck (LGU), or Lagrangian fibration (LF). If it is then a similar argument to case of $n=5$ above shows that $X^{[n]}$ and $Y^{[n]}$ cannot be birational, where $Y=M_X(2,H,3)$.

%
\begin{prop}\label{prop:deg12}
If $(X,H)$ is a polarised K3 surface with $H^2=12$ then the moduli space $Y=M_X(2,H,3)$ is another K3 surface with $\D(X^{[n]}) \simeq \D(Y^{[n]})$ for all $n\ge 1$. Moreover, 
the Hilbert schemes $X^{[n]}$ and $Y^{[n]}$ are birational if and only if there is a solution to either of the Pell's equations: \[2(n-1)x^2-3y^2=\pm1\qquad\trm{or}\qquad 3(n-1)x^2-2y^2=\pm1.\] In particular, when $n=5,6,7,8,9,11,
\dots$, these Hilbert schemes are not birational.
\end{prop}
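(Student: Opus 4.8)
The plan has three stages: obtain the D-equivalence for free from the set-up, reduce ``birational'' to a statement about the Markman--Mukai lattice via Markman's Torelli theorem, and convert that statement into the two Pell equations by an explicit lattice computation. The D-equivalence is immediate: $\ww^2 = H^2 - 2\cdot 2\cdot 3 = 0$, so $\ww = (2,H,3)$ is primitive isotropic, and $\vv = (1,H,4)$ satisfies $(\vv,\ww) = 1$ (equivalently $\gcd(2,3) = 1$); hence $Y = M_X(\ww)$ is a K3 surface carrying a universal family $\cE$ with $\cF_\cE\colon\D(X)\xra\sim\D(Y)$, and \cite[Proposition 8]{ploog2007equivariant} upgrades this to $\D(X^{[n]})\simeq\D(Y^{[n]})$ for all $n\ge1$.

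For the birationality criterion, Markman's Torelli theorem (phrased through property (iii) as in the introduction) says that $X^{[n]} = M_X(1,0,1-n)$ and $Y^{[n]} = M_Y(1,0,1-n)$ are birational if and only if there is a Hodge isometry $\H^*(X,\bbZ)\to\H^*(Y,\bbZ)$ sending $(1,0,1-n)$ to $(1,0,1-n)$. Composing with $\cF_\cE^\H$ (whose matrix is recorded in the excerpt), this is equivalent to the existence of a Hodge auto-isometry $g$ of $\H^*(X,\bbZ)$ with $g(1,0,1-n) = \pm(\cF_\cE^\H)^{-1}(1,0,1-n)$, the sign absorbed by $-\mathrm{id}$; a short inversion gives $(\cF_\cE^\H)^{-1}(1,0,1-n) = (5-2n,(n-2)H,5-3n)$. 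Since $\Pic(X) = \bbZ H$, the transcendental part $T$ of the Mukai Hodge structure has odd rank $21$, so no CM field can act on $T\otimes\bbQ$ (its degree would be an even divisor of $21$) and $O_{\mathrm{Hdg}}(T) = \{\pm\mathrm{id}\}$ by Zarhin's theorem; together with Nikulin's gluing lemma this identifies the Hodge auto-isometries of $\H^*(X,\bbZ)$ with the group $G$ of isometries $\psi$ of the rank-three lattice $\Lambda := \H^*_\alg(X,\bbZ)\simeq U\oplus\langle 12\rangle$ acting by $\pm\mathrm{id}$ on the discriminant group $A_\Lambda\simeq\bbZ/12$. So the question becomes: for which $n$ do $\vv_n := (1,0,1-n)$ and $\vv_n' := (5-2n,(n-2)H,5-3n)$ — both primitive of square $2(n-1)$, of divisibility one, with trivial image in $A_\Lambda$ — lie in a single $G$-orbit up to sign?

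This last lattice computation is the crux and the main obstacle. One first checks that $\vv_n^\perp$ and $(\vv_n')^\perp$ are both isometric to $M := \langle 12\rangle\oplus\langle -2(n-1)\rangle$ — for the second, push through $\cF_\cE^\H$ and compute on $Y$, which likewise has $\Pic(Y) = \bbZ\hat H$ with $\hat H^2 = 12$ — and that the inclusions $\bbZ\vv_n\oplus\vv_n^\perp\subseteq\Lambda$ and $\bbZ\vv_n'\oplus(\vv_n')^\perp\subseteq\Lambda$ are encoded by gluing homomorphisms $\bbZ/2(n-1)\to A_M$. Existence of the required $\psi\in G$ is then the existence of an isometry of $M$ identifying these two gluings and acting by $\pm\mathrm{id}$ on the $\langle 12\rangle$-part of $A_M$ — which is strictly finer than plain $O(\Lambda)$-equivalence of $\vv_n$ and $\vv_n'$: indeed for $n=5$ an isometry of $\Lambda$ sending $\vv_5$ to $\pm\vv_5'$ exists but must act on $A_\Lambda$ as multiplication by $5$, so it is not a Hodge isometry, matching the wall-crossing argument for $n=5$ in the text. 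The cleanest way to extract the two Pell equations is probably to run this through Bayer--Macr\`{\i}'s description \cite{bayer2013mmp} of the movable cone: for $X$ of Picard rank one it is bounded by the Hilbert--Chow wall and exactly one further wall, and $Y^{[n]}$ is reached across that other wall precisely when a suitable rank-two sublattice of $\Lambda$ containing $\vv_n$ and realizing the isotropic class $(2,H,3)$, respectively its ``dual'' $(3,H,2)$, represents $\pm 1$; since $H^2/2 = 6 = 2\cdot 3$ has only the one nontrivial coprime factorization, the two orderings give exactly $2(n-1)x^2 - 3y^2 = \pm1$ and $3(n-1)x^2 - 2y^2 = \pm1$, solvability of one of them being equivalent to birationality. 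The obstacle throughout is the discriminant-group bookkeeping: $U\oplus\langle 12\rangle$ has too few hyperbolic planes for Eichler's criterion, so the $G$-orbits genuinely depend on this arithmetic and the Pell equations cannot be avoided.

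For the listed values one simply checks solvability: reducing $2(n-1)x^2 - 3y^2 = \pm1$ and $3(n-1)x^2 - 2y^2 = \pm1$ modulo a small prime (usually $3$ or $5$) shows that for $n = 5,6,7,8,9,11$ neither equation has a solution — whereas $n = 2,3,4$, and for instance $n = 10$ and $n = 12$, do — so $X^{[n]}$ and $Y^{[n]}$ are not birational for those $n$.
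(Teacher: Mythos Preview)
Your D-equivalence step and your invocation of Markman's Torelli theorem match the paper exactly. Where you diverge is in how you extract the Pell equations from Markman's criterion, and this is where your argument becomes both more laborious than necessary and, in the end, incomplete.

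The paper does not attempt to compute the orbit of $\vv_n=(1,0,1-n)$ under the Hodge auto-isometry group of $\H^*(X,\bbZ)$. Instead it proves a general criterion (Theorem~\ref{prop:Torelli}): $X^{[n]}$ is birational to $Y^{[n]}$ if and only if there exists a primitive isotropic $\ww\in\H^*_\alg(X,\bbZ)$ with $\langle(1,0,1-n),\ww\rangle=\pm1$ and $Y\simeq M_X(\ww)$. The point is that such a $\ww$ is exactly the datum of a Hilbert--Chow wall of $\Mov(X^{[n]})$ whose base is $Y^{(n)}$. Proposition~\ref{prop:Partner} then classifies all primitive isotropic classes as $\ww=(p^2s,pqH,q^2t)$ with $2st=H^2=12$, and shows that $\{s,t\}$ is determined by $M_X(\ww)$; since $Y=M_X(2,H,3)$ forces $\{s,t\}=\{2,3\}$, the pairing condition $\langle(1,0,1-n),\ww\rangle=(n-1)p^2s-q^2t=\pm1$ is literally one of the two stated Pell equations. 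No orbit computation, no Zarhin, no Nikulin gluing.

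Your route via $O_{\Hdg}(T)=\{\pm\mathrm{id}\}$ and the reduction to isometries of $U\oplus\langle12\rangle$ acting by $\pm1$ on $\bbZ/12$ is correct and is an interesting reformulation, but you do not actually solve that orbit problem; instead you pivot to a Bayer--Macr\`i argument whose key sentence (``a suitable rank-two sublattice of $\Lambda$ containing $\vv_n$ and realizing the isotropic class $(2,H,3)$ \dots\ represents $\pm1$'') is not a precise statement and does not, as written, yield the equations. What is missing is precisely the isotropic-vector characterisation above: once you know that the question is ``does there exist isotropic $\ww$ with $\langle\vv_n,\ww\rangle=\pm1$ and $M_X(\ww)\simeq Y$'', the two Pell equations are immediate, and your auto-isometry machinery becomes unnecessary. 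Your observation that for $n=5$ there is an isometry of $\Lambda$ carrying $\vv_5$ to $\pm\vv_5'$ which acts by $5$ on $\bbZ/12$ is a nice concrete illustration of why the discriminant condition matters, but it is a sanity check rather than a step in the proof.
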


\begin{proof}
As discussed above, the derived Torelli theorem of Mukai and Orlov \cite{orlov1997equivalences} shows that $\D(X)\simeq\D(Y)$ and hence Ploog's result \cite[Proposition 8]{ploog2007equivariant} ensures that $\D(X^{[n]})\simeq\D(Y^{[n]})$ for all $n\ge1$. The conditions on when the Hilbert schemes are birational can be found in Theorem \ref{prop:Torelli} below.
\end{proof}

\begin{rmk}\label{rmk:summarydeg12}
We can summarise 
part of Proposition \ref{prop:deg12}, and in particular the answers to Question \ref{biratQ} for the example when $X=\text{K3}$, $\Pic(X)=\bbZ[H]$, $H^2=12$, and $Y=M_X(2,H,3)$, with the following table:
%


\begin{center}
\begin{tabular}{ |c|c|c|c|c|c|c|c|c|c|c|c|c|c|c|c|c|c|c|c|c| } \hline
$n$ &1&2&3&4&5&6&7&8&9&10&11&12&13&14&15&16&17\\  \hline
birational? & \xmark& \cmark& \cmark& \cmark& \xmark& \xmark& \xmark& \xmark& \xmark& \cmark& \xmark& \cmark& \xmark& \cmark& \cmark& \xmark& \xmark\\  \hline
\end{tabular}
\end{center} 
\end{rmk}

%

\begin{rmk}
Notice that 
the Hilbert schemes $X^{[n]}$ and $Y^{[n]}$ in Proposition \ref{prop:deg12} are all M-equivalent. Indeed, the 
Markman--Mukai lattices $\tilde\Lambda$ of $X^{[n]}$ and $Y^{[n]}$ are given by $\H^*(X,\bbZ)$ and $\H^*(Y,\bbZ)$, respectively; these are Hodge isometric by \cite{orlov1997equivalences}. 
When $n=p^k+1$ for some positive integer $k$, 
we can use Verbitsky's Torelli theorem to conclude that the non-birational examples are also not H-equivalent. For example, when $n=5,6,8,9,17,...$ the examples in Remark \ref{rmk:summarydeg12} are not H-equivalent, but for $n=7,11,13,16,19,\dots$ Torelli does not hold and so we cannot say whether they are H-equivalent or not. This information should be taken into consideration along with the diagram in \eqref{eqn:relations}. Finally, let us point out that all of these pairs of Hilbert schemes are also deformation equivalent to each other, and thus complement the recent articles mentioned in the introduction.
\end{rmk}
\vs

\subsection{K3s with many FM partners}

A second way to produce examples is by considering K3 surfaces with many Fourier--Mukai partners. However, this produces less ``constructible'' examples, as the following shows.

\begin{prop}
Let $(X,H)$ be a K3 surface of degree $2pqr$ and Picard rank one, where $p,q,r$ are relatively prime integers greater than 1. Then, for all $n$, there exists a K3 surface $Y$ such that $\D(X^{[n]})\simeq \D(Y^{[n]})$ and $X^{[n]}$ is not birational to $Y^{[n]}$.
\end{prop}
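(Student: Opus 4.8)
The plan is to combine three facts: Oguiso's count of Fourier--Mukai partners of a Picard rank one K3 surface, Ploog's result that a derived equivalence of K3 surfaces induces one of their Hilbert schemes, and the observation (already used in the degree twelve discussion above) that the movable cone of $X^{[n]}$ has exactly two boundary walls. The case $n=1$ is immediate, since $X$ then has at least three Fourier--Mukai partners not isomorphic to it, so from now on assume $n\geq 2$.

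Since $p,q,r$ are pairwise coprime and each greater than $1$, the product $pqr$ has at least three distinct prime factors, so by Oguiso's theorem \cite{oguiso2002K3} the surface $X$ has exactly $2^{\rho(pqr)-1}\geq 2^{3-1}=4$ pairwise non-isomorphic Fourier--Mukai partners; write them as $X=Y_0,Y_1,Y_2,Y_3,\dots$. By the derived Torelli theorem \cite{orlov1997equivalences} each $Y_i$ carries an equivalence $\D(X)\simeq\D(Y_i)$, hence $\D(X^{[n]})\simeq\D(Y_i^{[n]})$ for all $n$ by \cite[Proposition 8]{ploog2007equivariant}. It therefore suffices to show that, for each fixed $n$, not all of the Hilbert schemes $Y_i^{[n]}$ are birational to $X^{[n]}$.

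The heart of the argument is the bound: \emph{at most one} Fourier--Mukai partner of $X$ other than $X$ itself has its $n$-point Hilbert scheme birational to $X^{[n]}$. Indeed, since $X$ has Picard rank one, every $Y_i^{[n]}$ has Picard rank two, so by the wall-and-chamber description of the birational geometry of moduli spaces of sheaves on K3 surfaces \cite{bayer2013mmp} the movable cone of $X^{[n]}$ is a two-dimensional cone, subdivided into the ample cones of the smooth birational models of $X^{[n]}$, with any divisorial contraction occurring along one of its two boundary walls (and flopping contractions along the interior walls). If $Y$ is a K3 surface with $Y^{[n]}$ birational to $X^{[n]}$, then $Y^{[n]}$ is one of these models, and its Hilbert--Chow morphism is a divisorial contraction, so the associated wall is one of the two boundary walls; since exactly one chamber abuts each boundary wall, there are at most two such models, one being $X^{[n]}$ itself. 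Finally, two non-isomorphic K3 surfaces cannot give the same model: an isomorphism $Y^{[n]}\cong (Y')^{[n]}$ must carry the (canonical) Hilbert--Chow divisor class of one to that of the other, and passing to orthogonal complements for the Beauville--Bogomolov form yields a Hodge isometry $\H^2(Y,\bbZ)\simeq_\Hdg\H^2(Y',\bbZ)$, so $Y\cong Y'$ by the Torelli theorem for K3 surfaces. This proves the bound.

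Putting the two parts together, $X$ has at least four Fourier--Mukai partners but at most two of them---including $X$ itself---have $n$-point Hilbert scheme in the birational class of $X^{[n]}$, so at least two of them, and in particular at least one, call it $Y$, satisfy $\D(X^{[n]})\simeq\D(Y^{[n]})$ while $X^{[n]}$ is not birational to $Y^{[n]}$. The step I expect to be most delicate is the middle one: one has to verify carefully that the Hilbert--Chow wall of an arbitrary Hilbert-scheme model genuinely lies on the boundary of the movable cone (so that such models really are extremal), and that the identification of $\H^2$ of a Hilbert scheme with $\H^2$ of the underlying K3 surface via the exceptional class is canonical enough to be respected by isomorphisms; both of these rest on Markman's Hodge-theoretic Torelli theorem \cite{markman2011survey} together with the wall structure just recalled. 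In contrast with the explicit Pell-equation criterion of Proposition \ref{prop:deg12}, this argument does not identify which partner $Y$ works, which is the sense in which these examples are ``less constructible''.
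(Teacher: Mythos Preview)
Your argument is correct and follows essentially the same route as the paper: Oguiso's count gives at least four Fourier--Mukai partners, Ploog transfers the derived equivalence to Hilbert schemes, and then a pigeonhole on the two boundary walls of the movable cone (together with the Hodge-isometry argument on the Hilbert--Chow exceptional class) shows at most two partners can have Hilbert schemes in the birational class of $X^{[n]}$. Your treatment is slightly more careful than the paper's in handling $n=1$ separately and in noting that $pqr$ may have more than three prime factors (hence ``at least four'' partners rather than exactly four), but the core pigeonhole-plus-Torelli mechanism is identical.
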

\begin{proof}
From the condition on the degree of the polarisation of $X$, it follows from \cite{oguiso2002K3} that $X$ has 
four 
non-isomorphic Fourier--Mukai partners: $X,Y,Z,W$. Since $X$ has Picard rank one, we see that for every $n>1$, the movable cone of $X^{[n]}$ has exactly two extremal rays. Suppose, for a contradiction, that \emph{three} of the four Hilbert schemes $X^{[n]},Y^{[n]},Z^{[n]}$ and $W^{[n]}$ are all birational. 
Then these three birational maps induce maps between the movable cones of these Hilbert schemes, sending extremal rays to extremal rays. In particular, it follows from the arguments given in Section \ref{sec:deg12} that two of these Hilbert schemes must have Hodge isometric second integral cohomology groups with an isometry preserving the class of the exceptional divisor. Hence, the second integral cohomology groups of the underlying K3s must be Hodge isometric as well, meaning that two of the Fourier--Mukai partners are isomorphic, which they are not. Thus, by contradiction, two of the Hilbert schemes cannot be birational for any $n\ge1$.
\end{proof}
%

\section{
Criterion For Birationality of Hilbert Schemes}
We give a criterion for when Hilbert schemes of points on certain K3 surfaces are birational. More specifically, given a K3 surface $X$ and a FM-partner $Y=M_X(\vv)$, we provide a criterion which determines precisely when $X^{[n]}$ is birational to $Y^{[n]}$.
First of all, let us start by recalling some properties of moduli spaces on K3 surfaces of Picard rank one:
let $(X,H)$ be a polarised K3 surface such that $\Pic(X)=\bbZ H$.
 
\begin{prop}\label{prop:Partner}
Let $\vv=(r,cH,x)$ be a  primitive isotropic Mukai vector. Then the following holds:
\begin{enumerate}
\item There exist integers $p,s,q,t$ such that $(r,cH,x)=(p^2 s,pq H,q^2 t)$, where $H^2=2st$ and $\gcd(p,q)=1$.
\item If $M_X(r,cH,x)$ is fine then $(r,cH,x)=(p^2 s,pq H,q^2 t)$ with $\gcd(ps,qt)=1$. Moreover, in this case $M_X(p^2 s,pqH,q^2 t) \simeq M_X(s,H,t)$.
\item $M_X(s,H,t) \simeq M_X(s',H,t')$ if and only if $\{s,t\}=\{s',t'\}$.
\end{enumerate}
\end{prop}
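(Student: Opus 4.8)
Write $H^2=2d$, so the isotropy condition $\vv^2=0$ reads $dc^2=rx$ and primitivity of $\vv$ reads $\gcd(r,c,x)=1$ (degenerate cases with $c=0$, where $\vv=\pm(1,0,0)$ or $\pm(0,0,1)$ and $M_X(\vv)\simeq X$, aside). For \textup{(1)} I would argue one prime at a time: fixing a prime $\ell$ and writing $a,b,e,f$ for the $\ell$-adic valuations of $r,c,x,d$, the two conditions become $2b+f=a+e$ and $\min(a,b,e)=0$. Setting $p:=\prod_{v_\ell(r)\ge v_\ell(x)}\ell^{v_\ell(c)}$, $q:=c/p$, $s:=r/p^2$ and $t:=x/q^2$, the vanishing at each $\ell$ of (at least) one of $a,b,e$ gives at once that $s,t$ are integers, that $pq=c$ with $\gcd(p,q)=1$, and that $st=rx/c^2=d$; this proves \textup{(1)}.

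For \textup{(2)}, recall Mukai's criterion \cite{mukai1987moduli} that $M_X(\vv)$ is fine if and only if there is a class $\vv'\in\H^*_\alg(X,\bbZ)$ with $(\vv,\vv')=1$; since $\Pic(X)=\bbZ H$ the pairings $(\vv,\vv')$ range exactly over $\gcd(r,x,cH^2)\bbZ=\gcd(r,x,2cd)\bbZ$, so $M_X(\vv)$ is fine precisely when $\gcd(r,x,2cd)=1$. Feeding in the decomposition of \textup{(1)} (so $d=st$), this becomes the elementary assertion that $\gcd(p^2s,\,q^2t,\,2pqst)=1$ if and only if $\gcd(ps,qt)=1$: if a prime divides $\gcd(ps,qt)$ it divides all three, and conversely $\gcd(ps,qt)=1$ already forces $p^2s$ and $q^2t$ to be coprime. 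This settles the first assertion of \textup{(2)}.

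The isomorphism $M_X(p^2s,pqH,q^2t)\simeq M_X(s,H,t)$ in \textup{(2)}, and the ``if'' direction of \textup{(3)}, I would obtain from explicit autoequivalences of $\D(X)$. Line bundle twists $(-)\otimes\cO_X(kH)$ act on $\H^*(X,\bbZ)$ by $(r,cH,x)\mapsto(r,(c+kr)H,\ast)$; the composite $\bbD_X\circ[1]\circ T_{\cO_X}$ acts by $(r,cH,x)\mapsto(x,cH,r)$ (interchanging rank and the third coordinate while fixing the divisor part), and $\bbD_X$ alone by $(r,cH,x)\mapsto(r,-cH,x)$. A Euclidean-algorithm type manipulation shows that a suitable composition of these carries $(s,H,t)$ to $(p^2s,pqH,q^2t)$, and since each of these functors takes Gieseker-stable sheaves to stable complexes it induces an isomorphism of the two-dimensional moduli spaces (any intervening wall-crossing only produces a birational, hence for a K3 surface isomorphic, model; cf.\ \cite{bayer2013mmp}). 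For the ``if'' part of \textup{(3)} note that $[1]\circ T_{\cO_X}$ sends $(s,H,t)$ to $(t,-H,s)$ and then $\bbD_X$ sends $(t,-H,s)$ to $(t,H,s)$, so $M_X(s,H,t)\simeq M_X(t,-H,s)\simeq M_X(t,H,s)$; the case $(s,t)=(s',t')$ is of course trivial.

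The substance of the proposition is the converse in \textup{(3)}, and this is the step I expect to be hardest. An isomorphism $M_X(s,H,t)\simeq M_X(s',H,t')$ induces, via the cohomological Fourier--Mukai transforms of the two (possibly twisted) universal families together with the orientation theorem for Fourier--Mukai equivalences, an orientation-preserving Hodge isometry $g$ of $\H^*(X,\bbZ)$ with $g(s,H,t)=\pm(s',H,t')$. Restricting $g$ to $\H^*_\alg(X,\bbZ)\simeq U\oplus\langle 2d\rangle$ and examining its action on the discriminant group $\bbZ/2d$ --- which, since $g$ must agree there with a Hodge isometry of the transcendental lattice, is forced to be $\pm\mathrm{id}$ --- one finds that $g$ can connect the primitive isotropic vectors $(s,H,t)$ and $(s',H,t')$ exactly when $\{s,t\}=\{s',t'\}$; this is precisely Oguiso's lattice-theoretic analysis of the Fourier--Mukai partners of a Picard rank one K3 surface \cite{oguiso2002K3}. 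The crux of the whole argument is thus identifying which Hodge isometries of the Mukai lattice come from autoequivalences (respectively from isomorphisms of moduli spaces) and then invoking this classification.
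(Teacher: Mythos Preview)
Your argument is correct, but it diverges from the paper's in emphasis and in places takes a longer road. For (i) the paper simply sets $p:=\gcd(r,c)$ and reads off everything from primitivity and isotropy in three lines; your prime-by-prime valuation bookkeeping recovers exactly this $p$ but at greater cost. For the isomorphism in (ii) and for (iii) the paper does not argue at all: it invokes \cite[\S5.2 and Prop.~5.6]{shouhei2008on} for $M_X(p^2s,pqH,q^2t)\simeq M_X(s,H,t)$ and \cite{hosono2003fourier} (again with \cite{shouhei2008on}) for the classification of the $M_X(s,H,t)$. Your sketch via explicit autoequivalences and the Oguiso-style discriminant-group analysis is precisely the content of those references, so the two approaches agree in substance; what you gain is a self-contained outline, what the paper gains is brevity. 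One caution: your ``Euclidean-algorithm type manipulation'' taking $(s,H,t)$ to $(p^2s,pqH,q^2t)$ is the nontrivial step---on the level of $c/r$ your moves generate the Fricke group $\Gamma_0(d)^+$, and the transitivity statement you need is exactly the cusp classification carried out in \cite{shouhei2008on}, so this line deserves either a reference or an honest verification rather than an assertion. Likewise, the passage from autoequivalence to isomorphism of two-dimensional moduli spaces is fine (birational K3 surfaces are isomorphic), but ``takes Gieseker-stable sheaves to stable complexes'' is not literally what happens; the clean statement is that the autoequivalence transports one Bridgeland stability condition to another, and then \cite{bayer2013mmp} identifies the resulting moduli spaces.
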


\begin{proof}
We set $p:=\gcd(r,c)$. Since $v$ is primitive and isotropic, we have $\gcd(p,x)=1$ and $c^2 H^2/2=rx$, respectively. Thus, we see that $p^2 \mid r$. If we set $r=p^2 s$ and $c=pq$ then we must have $q^2H^2/2=sx$ and $\gcd(p^2 s,pq)=p$. This implies that we have $\gcd(q,s)=1$, and hence $x=q^2 t$ and $H^2/2=st$. 

Recall from \cite[Corollary 4.6.7]{huybrechts2010geometry} that $M_X(r,cH,x)$ is a fine moduli space if and only if $\gcd(r,cH^2,x)=1$. Hence $\gcd(r,x)=1=\gcd(ps,qt)$.  

By \cite[\S 5.2 and Prop. 5.6]{shouhei2008on}, the Mukai vector $(p^2 s,pq H,q^2 t)=p^2 s \exp(\tfrac{q}{ps}H)$ corresponds to $(k,s)$ in \cite[Equation (21)]{shouhei2008on}, where $k$ is any integer. Since this identification is independent of $p,q$, we get $M_X(p^2 s,pqH,q^2 t) \simeq M_X(s,H,t)$. The last claim is the content of \cite{hosono2003fourier} (see also \cite[Prop. 5.6]{shouhei2008on}).
\end{proof}

The previous proposition, together with Verbitsky's global Torelli theorem \cite{verbitsky2013torelli}, Markman's computation of the monodromy group \cite{markman2010integral}, and Bayer and Macr\`i's results about the ample cone of moduli spaces \cite{bayer2013mmp}, gives the following:

\begin{thm}\label{prop:Torelli}
Let $X$ and $Y$ be two derived equivalent K3 surfaces of Picard rank one. Then, $X^{[n]}$ is birationally equivalent to $Y^{[n]}$ if and only if $p^2 s(n-1)-q^2 t=\pm 1$ and $Y=M_X(p^2 s,pq H,q^2 t)$. Moreover, $\{s,t\}$ is uniquely determined by $Y$.
\end{thm}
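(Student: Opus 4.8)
The plan is to translate the birationality question entirely into the Markman--Mukai lattice picture and then identify the possible orbits of isometries under the monodromy group. By Markman's Torelli theorem (as rephrased in the introduction), $X^{[n]}$ and $Y^{[n]}$ are birational if and only if there is a Hodge isometry $\H^*(X,\bbZ)\simeq_\Hdg\H^*(Y,\bbZ)$ sending the Mukai vector $(1,0,1-n)$ of $X^{[n]}$ to the Mukai vector $(1,0,1-n)$ of $Y^{[n]}$. Since $X$ and $Y$ are derived equivalent K3 surfaces, $\H^*(X,\bbZ)$ and $\H^*(Y,\bbZ)$ are already Hodge isometric, so I would fix one such identification coming from a Fourier--Mukai kernel and then ask which classes of square $2n-2$ in the algebraic lattice can be sent to $(1,0,1-n)$ by a monodromy operator. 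The first step, then, is to set up the algebraic Mukai lattice $\H^*_\alg(X,\bbZ)=U\oplus\bbZ H$ with $H^2=2d$ and describe which primitive vectors $w$ with $w^2=2n-2$ are monodromy-equivalent to $(1,0,1-n)$; by Eichler's criterion (or Markman's description of the monodromy group as the group of isometries acting trivially on the discriminant, up to sign), two such primitive vectors are equivalent if and only if they have the same divisibility and the same image in the discriminant group, which in the Picard rank one case pins things down.

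Next I would use Proposition~\ref{prop:Partner} to parametrise the FM-partner $Y$: any such $Y$ is $M_X(p^2 s, pqH, q^2 t)$ with $H^2=2st$, $\gcd(p,q)=1$, and by (iii) the unordered pair $\{s,t\}$ is a complete invariant of $Y$, which gives the final sentence of the theorem for free. Under the chosen Hodge isometry $\H^*(X,\bbZ)\simeq\H^*(Y,\bbZ)$ that sends $\vv=(p^2 s,pqH,q^2 t)$ to $(1,0,-1)=(0,0,-1)+(1,0,0)$ (the Mukai vector whose orthogonal complement is $\H^2(Y,\bbZ)$), the Mukai vector $(1,0,1-n)$ of $Y^{[n]}$ pulls back to some explicit primitive class $w_n\in\H^*_\alg(X,\bbZ)$ of square $2n-2$. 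The computational heart of the argument is then to work out $w_n$ explicitly — using, for instance, the cohomological FM transform formulas of the type used in Section~\ref{sec:deg12} — and to determine its divisibility: one finds that $w_n$ is monodromy-equivalent to $(1,0,1-n)$ exactly when a certain arithmetic condition holds, and unwinding the discriminant-group computation produces the Pell-type equation $p^2 s(n-1)-q^2 t=\pm1$.

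Finally I would assemble the equivalence. For the ``if'' direction: given a solution to $p^2 s(n-1)-q^2 t=\pm1$ with $Y=M_X(p^2 s,pqH,q^2 t)$, exhibit the monodromy operator realising the equality of the two relevant primitive classes (this is where the explicit solution of the Pell equation feeds in to build the isometry, combined with a sign twist coming from the dualising functor as in the $n=4$ case of Section~\ref{sec:deg12}), and conclude birationality by Markman's Torelli theorem, also invoking Bayer--Macr\`i to upgrade the birational map obtained on the level of lattices to an actual birational map of moduli spaces when necessary. For the ``only if'' direction: a birational map induces a Hodge isometry $\H^2(X^{[n]},\bbZ)\simeq\H^2(Y^{[n]},\bbZ)$, extend it (or its composition with a monodromy operator) to the Markman--Mukai lattices, observe it must send $(1,0,1-n)$ to $\pm(1,0,1-n)$ because these are characterised, up to sign and monodromy, among square-$(2n-2)$ classes by their divisibility, and then read off that $Y$ must be of the stated form and that the Pell equation must be solvable. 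The main obstacle I anticipate is the explicit divisibility computation for $w_n$ and confirming that divisibility (together with the discriminant image) really is a complete monodromy invariant here — i.e. checking that there is no subtlety with the two orbits of square-$(2n-2)$ vectors that forces the appearance of \emph{both} equations $2(n-1)x^2-3y^2=\pm1$ and $3(n-1)x^2-2y^2=\pm1$ in the degree-twelve case, which corresponds precisely to the two choices $(p,q)=(1,1)$ with $\{s,t\}=\{2,3\}$ read in either order.
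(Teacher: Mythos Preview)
Your high-level strategy---reduce birationality to a Hodge isometry of Mukai lattices carrying $(1,0,1-n)$ to $(1,0,1-n)$ via Markman's Torelli and the monodromy description---matches the paper's. Where you diverge is in the execution. You propose to pull back $(1,0,1-n)_Y$ through a fixed Fourier--Mukai isometry to obtain an explicit class $w_n\in\H^*_\alg(X,\bbZ)$ of square $2n-2$, and then analyse its monodromy orbit using divisibility and discriminant data. The paper instead tracks the \emph{isotropic} class $(0,0,1)_Y$: if $\psi:\H^*(X,\bbZ)\to\H^*(Y,\bbZ)$ is a Hodge isometry with $\psi(1,0,1-n)=(1,0,1-n)$, then $\ww:=\psi^{-1}(0,0,1)$ is a primitive isotropic vector with $\langle(1,0,1-n),\ww\rangle=1$ and $M_X(\ww)\simeq M_Y(0,0,1)=Y$. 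Proposition~\ref{prop:Partner} then parametrises $\ww$ as $\pm(p^2s,pqH,q^2t)$, and the pairing condition \emph{is} the Pell equation $p^2s(n-1)-q^2t=\pm1$. No orbit computation for square-$(2n-2)$ vectors is needed.

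Two points in your write-up deserve care. First, the full Mukai lattice is unimodular, so talk of ``divisibility'' and ``discriminant group'' there is vacuous; the genuine constraint is that the isometry be a \emph{Hodge} isometry, which forces you down into the rank-three algebraic sublattice $U\oplus\bbZ H$ and its discriminant group, together with the condition that the induced action there be $\pm1$ so as to glue with $\pm\mathrm{id}$ on $T(X)$. Your approach can be made to work along those lines, but it is a longer computation than the paper's. Second, your description of the image of $\vv=(p^2s,pqH,q^2t)$ under the FM transform is garbled: the universal family sends $\vv$ to $(0,0,1)_Y$, not to ``$(1,0,-1)$''. The paper's trick of following the isotropic vector rather than the square-$(2n-2)$ vector is precisely what lets Proposition~\ref{prop:Partner} do all the arithmetic work.
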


\begin{proof}
By the global Torelli theorem for irreducible symplectic manifolds, $X^{[n]}$ and $Y^{[n]}$ are birational if and only if they are Hodge isometric through a monodromy operator. By Markman's computation of the monodromy groups (see \cite[Corollary 1.3]{bayer2013mmp}), this means that such an isometry extends to the Mukai lattice associated to the two K3s $X$ and $Y$. Thus, $X^{[n]}$ is birationally equivalent to $Y^{[n]}$ if and only if there is a primitive isotropic Mukai vector $\ww=\pm (p^2 s,pq H,q^2 t)\in \H^*(X,\bbZ)$ such that $\langle (1,0,1-n),\ww \rangle =1$ and $Y \simeq M_X(\ww)$. The first condition is equivalent to $p^2 s(n-1)-q^2 t=\pm 1$ and, by Proposition \ref{prop:Partner}, the pair $\{s,t\}$ is determined by $Y$. Notice that the Mukai vector $\ww$ will correspond to the Hilbert--Chow (birational) contraction on $X^{[n]}$ which has $Y^{(n)}$ as the base variety.
\end{proof}

\begin{rmk}
Notice that because the pair $\{s,t\}$ is determined by $M_X(p^2s,pqH,q^2t)$ and $M_X(s,H,t)\simeq M_X(t,H,s)$ by Proposition \ref{prop:Partner}(iii), we actually have two Pell's equations governing the birationality. That is, if we want to check whether two Hilbert schemes are birational then we need to find a solution to either: \[p^2 s(n-1)-q^2 t=\pm 1\qquad \text{or} \qquad p^2 t(n-1)-q^2 s=\pm 1.\]
\end{rmk}

Let us look back at the case analysed in Proposition \ref{prop:deg12}:
\begin{exa}
If $H^2=12$ then the only Fourier--Mukai partner of $X$, other than itself, is given by $Y:=M_X(2,H,3)$. 
For $n=5$, it is easy to see that there is no solution to $8p^2-3q^2=\pm 1$ or $12p^2-2q^2=\pm 1$. Hence, $X^{[5]}$ and $Y^{[5]}$ are not birationally equivalent. 
For $n=10$, we have $\langle (1,0,-9),(27,33H,242) \rangle=1$, and so $X^{[10]}$ and $Y^{[10]}$ are birational.
Similarly, for $n=12$, we can observe that $\langle(1,0,-11),(3,4H,32) \rangle=1$, hence $X^{[12]}$ and $Y^{[12]}$ are birational.


\end{exa}

\begin{exa}\label{5H13}
If $H^2=130=2\cdot5\cdot13$ then we have two Fourier--Mukai partners and the non-trivial partner is given by $Y=M_X(5,H,13)$. By Theorem \ref{prop:Torelli}, we see that $X^{[n+1]}$ is birational to $Y^{[n+1]}$ if and only if there is a solution to either: 
\[5np^2-13q^2=\pm 1\qquad \text{or} \qquad 13np^2-5q^2=\pm 1.\]
Reducing these equations modulo 5 and 13, respectively, we see that there are no solutions to $13q^2=\pm 1\mod5$ or $5q^2=\pm 1\mod 13$. This shows that there are no solutions to the original equations for any $n$. In other words, whilst these Hilbert schemes are always derived equivalent, they are, in fact, never birational!
\end{exa}

\subsection{Counting birational equivalence classes}\label{subsec:bircounting}
An interesting question concerns the number of non-birational derived equivalent Hilbert schemes that we can produce starting from the set of Fourier--Mukai partners of $X$. As we analysed in the previous sections, the two numbers are strictly linked: for any $X$ as above, the Hilbert scheme $X^{[n]}$ has precisely two boundaries of the movable cone and they only depend on the algebraic part of its Hodge structure, hence the Hilbert scheme $Y^{[n]}$ on any Fourier--Mukai partner $Y$ of $X$ has the same geometry of rays making up the movable cone, see \cite[Prop. 13.1]{bayer2013mmp}. If $X^{[n]}$ and $Y^{[n]}$ are birational for two different Mukai partners $X$ and $Y$, then one ray from each cone has to correspond to the Hilbert--Chow contractions. Therefore, if $N$ is the number of Fourier--Mukai partners of $X$, the number $B$ of birational equivalence classes of Hilbert schemes of points on these partners is either $N$ or $N/2$. Indeed, the former occurs when $X^{[n]}$ is not birational to any other $Y^{[n]}$, and the latter occurs when there is a single $Y^{[n]}$ birational to $X^{[n]}$ which represents the second Hilbert-Chow wall. Note that as soon as $X^{[n]}$ is birational to $Y^{[n]}$ for a single Fourier--Mukai partner $Y$ of $X$ then the same happens for all Fourier--Mukai partners of $X$.

When $N=B$, we have one of the following:
\begin{itemize}
\item There is a Hilbert--Chow wall and a different divisorial contraction on $X^{[n]}$.
\item $X^{[n]}$ has a Lagrangian fibration.
\item There are two Hilbert--Chow walls in the movable cone of $X^{[n]}$ which are exchanged by a birational map. 
\end{itemize}  
To state the result properly, we need to introduce a few more notations and results contained in \cite{yanagida2014bridgeland}. In \emph{loc.\;cit.}, the results are stated for Abelian surfaces but they still hold for K3 surfaces {\it mutatis mutandis}. 
We assume that $\sqrt{ (n-1)d} \not \in \bbZ $, where $d=H^2/2$ is half the degree of the K3 surface as before and $n>2$.

\begin{dfn}
For $(x,y)\in \bbR^2$, set
\begin{align*}
P(x,y):=\begin{pmatrix}y&(n-1) x\\ x&y\end{pmatrix}.
\end{align*}
We also set
\begin{align*}
S_{d,n}:=\left\{\begin{pmatrix}y&(n-1) x\\ x&y\end{pmatrix}\,\Bigg|\,
\begin{aligned}
x=a \sqrt{s}, y=b \sqrt{t},\;
a,b,s, t \in\bbZ\\
s,t>0,\; st=d,\; 
y^2-(n-1) x^2=\pm 1 
\end{aligned}
\right\}.
\end{align*}
\end{dfn}
The group we just defined has the following structure:
\begin{lem}
If $n>2$, then $S_{d,n}/\pm 1$ is an infinite cyclic group.
\end{lem}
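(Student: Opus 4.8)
The plan is to realize $S_{d,n}$ as a subgroup of the units (of norm $\pm 1$) in the real quadratic order $\bbZ[\sqrt{(n-1)d}]$, and then invoke Dirichlet's unit theorem. First I would observe that $P(x,y)$ is the matrix of multiplication by $y + x\sqrt{n-1}$ acting on the rank-two lattice spanned by $1$ and $\sqrt{n-1}$, so that $P(x,y)P(x',y') = P(xy'+x'y,\; yy'+(n-1)xx')$; in particular the set of all such matrices is closed under multiplication, commutative, and $\det P(x,y) = y^2 - (n-1)x^2$. Hence the defining condition $y^2 - (n-1)x^2 = \pm 1$ is exactly the statement that $P(x,y) \in \mathrm{SL}_2^{\pm}$, and one checks directly that $S_{d,n}$ is closed under multiplication and inversion: if $x = a\sqrt{s}$, $y = b\sqrt{t}$ with $st = d$, then from $y^2 - (n-1)x^2 = b^2 t - (n-1)a^2 s = \pm 1$ one sees $\gcd(s,t)$ divides $1$, so writing a product $P(x,y)P(x',y')$ back in the form $(a''\sqrt{s}, b''\sqrt{t})$ is a matter of bookkeeping with the two square-free-part constraints. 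This shows $S_{d,n}$ is a group, and $S_{d,n}/\pm 1$ is abelian.

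Next I would identify $S_{d,n}/\pm 1$ with a subgroup of $\cO^\times_{>0}/\pm1$ where $\cO = \bbZ[\sqrt{(n-1)d}]$: the assignment $P(a\sqrt{s}, b\sqrt{t}) \mapsto b\sqrt{t} + a\sqrt{s}\sqrt{n-1} = b\sqrt{t} + a\sqrt{(n-1)s}$ lands in $\bbQ(\sqrt{(n-1)d})$ because $\sqrt{(n-1)s}\cdot\sqrt{t} = \sqrt{(n-1)d}$ up to a rational, and in fact in $\cO$ after clearing denominators; the norm condition $y^2 - (n-1)x^2 = \pm1$ says precisely that the image is a unit. This map is an injective homomorphism, so $S_{d,n}/\pm 1$ embeds in the unit group of an order in a real quadratic field, which by Dirichlet is isomorphic to $\bbZ$ (times $\pm 1$). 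Therefore $S_{d,n}/\pm 1$ is either trivial or infinite cyclic, and it remains to rule out the trivial case.

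For non-triviality I would exhibit one nontrivial element. The hypothesis $\sqrt{(n-1)d}\notin\bbZ$ guarantees $(n-1)d$ is not a perfect square, so the Pell equation $Y^2 - (n-1)d\, X^2 = 1$ has a nontrivial solution $(X_0, Y_0)$ with $X_0 > 0$; setting $s = d$, $t = 1$, $a = X_0$, $b = Y_0$ gives $x = X_0\sqrt{d}$, $y = Y_0$ with $y^2 - (n-1)x^2 = Y_0^2 - (n-1)d\,X_0^2 = 1$, hence a nontrivial element of $S_{d,n}$ not equal to $\pm P(0,1)$. This forces $S_{d,n}/\pm1 \cong \bbZ$.

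The main obstacle I anticipate is the closure computation in the first paragraph: verifying that a product of two matrices in $S_{d,n}$, when its off-diagonal and diagonal entries are re-expressed, again has the required shape $x'' = a''\sqrt{s''}$, $y'' = b''\sqrt{t''}$ with $s''t'' = d$. The point is that the norm-$\pm1$ condition is quite restrictive — it pins down the square-free parts of $x^2$ and $y^2$ relative to $d$ — and one must check these constraints propagate through multiplication; this is where the role of the hypothesis $\sqrt{(n-1)d}\notin\bbZ$ (which keeps the ambient field genuinely quadratic, so the embedding above is well-defined and injective) is essential. Everything after that identification is a direct appeal to Dirichlet's unit theorem plus the classical solvability of Pell's equation.
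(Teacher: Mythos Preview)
The paper's own proof is just a citation to \cite[Corollary 6.6]{yanagida2014bridgeland}, so there is no argument in the paper to compare against; your proposal is an attempt to supply what the paper outsources. The overall strategy---embed $S_{d,n}/\pm1$ into the unit group of a real quadratic order and invoke Dirichlet---is the right idea, but the embedding you write down does not work as stated.

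The specific gap is the claim that $P(a\sqrt{s},b\sqrt{t})\mapsto b\sqrt{t}+a\sqrt{(n-1)s}$ lands in $\bbQ(\sqrt{(n-1)d})$. Knowing that $\sqrt{(n-1)s}\cdot\sqrt{t}=\sqrt{(n-1)d}$ does \emph{not} force either factor into $\bbQ(\sqrt{(n-1)d})$; in general the element lives in the biquadratic field $\bbQ(\sqrt{t},\sqrt{(n-1)s})$. Concretely, take $d=2$, $n=4$, $s=1$, $t=2$, $a=b=1$: then $b^2t-(n-1)a^2s=2-3=-1$, so $P(1,\sqrt{2})\in S_{2,4}$, but your map sends it to $\sqrt{2}+\sqrt{3}$, which is not in $\bbQ(\sqrt{6})$. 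So the embedding, and hence the appeal to Dirichlet, breaks down.

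The fix is to square: $\bigl(y+x\sqrt{n-1}\bigr)^2=\bigl(b^2t+(n-1)a^2s\bigr)+2ab\sqrt{(n-1)d}$ genuinely lies in $\bbZ[\sqrt{(n-1)d}]$ and has norm $1$. This gives a homomorphism $S_{d,n}/\pm1\to\cO^\times$, and you then need to check its kernel is trivial, i.e.\ that $S_{d,n}/\pm1$ has no $2$-torsion. This is exactly where $n>2$ enters: an order-two element would come from $P(x,0)$ with $(n-1)x^2=1$, forcing $n-1=1$. (For $n=2$ the group really does pick up extra $2$-torsion, as the paper notes in the example following the lemma.) Once you have an injection into $\cO^\times/\{\pm1\}\simeq\bbZ$, your Pell argument for non-triviality goes through unchanged.

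Finally, the closure of $S_{d,n}$ under multiplication---which you flag as an obstacle---is most cleanly handled \emph{after} this embedding rather than before: once you know the squared elements form a subgroup of $\cO^\times$, closure follows. Trying to verify directly that a product $P(x,y)P(x',y')$ again has entries of the form $a''\sqrt{s''}$, $b''\sqrt{t''}$ with $s''t''=d$ is doable but fiddly, and your sketch does not actually carry it out.
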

\begin{proof}See \cite[Corollary 6.6]{yanagida2014bridgeland}.\end{proof}
The key use of this group is that its action allows us to determine different presentations of the Mukai vector $(1,0,1-n)$ corresponding to the Hilbert scheme of points on $X$ as a sum of two isotropic vectors (which will correspond to the Mukai vectors $(1,0,0)$ and $(0,0,1-n)$ on a Fourier--Mukai partner of $X$), as proven in \cite[Lemma 6.8]{yanagida2014bridgeland}.
The number $B$ then depends on a generator of $S_{d,n}/\pm 1$: 
\begin{prop}
We set $N:=2^{\rho(d)-1}$, where $\rho(d)$ is the number of prime divisors of $d$, and assume $n>2$. Let $X_1,...,X_N$ be all the Fourier--Mukai partners of $X$ and $$\cB:=\{{X_i}^{[n]} \mid i=1,...,N \}/\sim$$ be the set of birational equivalence classes of ${X_i}^{[n]}$. Then $|\cB|$ is either $N$ or $N/2$.
\end{prop}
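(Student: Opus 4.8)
The plan is to view birationality as an equivalence relation on the finite set $\{X_1,\dots,X_N\}$ of Fourier--Mukai partners of $X$, so that $\cB$ is exactly its set of equivalence classes and $\sum_{C\in\cB}|C|=N$; it then suffices to show that all classes have the \emph{same} cardinality $c$ and that $c\in\{1,2\}$, for then $|\cB|=N/c\in\{N,N/2\}$. Recall from \cite{oguiso2002K3} that $N=2^{\rho(d)-1}$, and that by Proposition \ref{prop:Partner}(ii)--(iii) the $X_i$ are precisely the surfaces $M_X(s,H,t)$ with $st=d$ and $\gcd(s,t)=1$, up to the swap $\{s,t\}\leftrightarrow\{t,s\}$; composition of Fourier--Mukai transforms (cf. \cite{hosono2003fourier}) makes this set into an abelian group $\simeq(\bbZ/2)^{\rho(d)-1}$ with identity $X$.

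First I would bound $c$ above. Since $\Pic(X_i)=\bbZ H_i$, the group $\Pic(X_i^{[n]})$ has rank two, so by \cite[Prop. 13.1]{bayer2013mmp} (together with Markman's monodromy computation, cf. \cite{markman2011survey}) the movable cone of $X_i^{[n]}$ is a two-dimensional cone with exactly two extremal rays, at least one of which is the Hilbert--Chow wall of $X_i^{[n]}\to X_i^{(n)}$. Now a K3 surface $W$ has $W^{[n]}$ birational to $X_i^{[n]}$ if and only if the symmetric product $W^{(n)}$ occurs as the base of the Hilbert--Chow contraction of some birational model of $X_i^{[n]}$, namely the model whose ample cone is the chamber of the movable cone adjacent to the relevant wall; indeed, any such model is a projective symplectic resolution of $W^{(n)}$, hence is isomorphic to $W^{[n]}$. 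As explained in Section \ref{sec:deg12}, a Hilbert--Chow wall is divisorial, so it is one of the two \emph{boundary} walls, and under a birational identification of movable cones it can only be matched with another Hilbert--Chow wall (Brill--Noether, Li--Gieseker--Uhlenbeck and Lagrangian-fibration walls being distinguished by divisibility of the exceptional class, cf. \cite[Corollary 1.3]{bayer2013mmp}). Since a boundary wall borders a unique chamber, each of the at most two boundary Hilbert--Chow walls of $X_i^{[n]}$ determines a single such $W$, and $W$ lies in $\{X_1,\dots,X_N\}$ because the associated Fourier--Mukai transform identifies its algebraic Mukai lattice with that of $X$. Hence the class of $X_i$ is in bijection with the set of distinct Hilbert--Chow bases occurring in the birational geometry of $X_i^{[n]}$, and this set has at most two elements, so $|C|\le 2$ for all $C\in\cB$.

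Next I would show $c$ is the same for every $i$. The wall-and-chamber decomposition of the movable cone of $X_i^{[n]}$ — which of its two boundary rays are of Hilbert--Chow type, and, by Proposition \ref{prop:Partner}(iii), the pair $\{s,t\}$ attached to each (hence the surface $M_{X_i}(s,H,t)$) — depends only on the algebraic Mukai lattice of $X_i$ and the Mukai vector $(1,0,1-n)$, via \cite[Prop. 13.1]{bayer2013mmp}. But all the $X_i$ are Fourier--Mukai partners of $X$, so by Orlov's derived Torelli theorem \cite{orlov1997equivalences} these data are abstractly identical on every $X_i$ (the algebraic Mukai lattice is $U\oplus\langle 2d\rangle$, with $(1,0,1-n)=e+(n-1)f$ for a hyperbolic basis $e,f$ of $U$). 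Phrased in the group language above, the birational class of $X_i$ is the translate, by the group element of $X_i$, of the fixed set $A$ of ``admissible'' factorizations — those $\{s,t\}$ for which one of the Pell equations $p^2s(n-1)-q^2t=\pm1$ or $p^2t(n-1)-q^2s=\pm1$ has a solution with $\gcd(p,q)=\gcd(ps,qt)=1$, together with the trivial factorization $\{1,d\}$ corresponding to $X$ itself (cf. Theorem \ref{prop:Torelli} and the remark after it). Since these translates partition the group and $A$ contains the identity, $A$ is a subgroup, of some order $c$. In either formulation $c$ is independent of $i$ and, by the previous paragraph, $c\in\{1,2\}$, whence $|\cB|=N/c\in\{N,N/2\}$. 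Which value occurs can be extracted, via \cite[Lemma 6.8]{yanagida2014bridgeland}, from a generator of the infinite cyclic group $S_{d,n}/\pm1$ acting on the isotropic decompositions of $(1,0,1-n)$: one gets $c=2$ precisely when the second boundary wall of $X^{[n]}$ is again a Hilbert--Chow wall with base not isomorphic to $X^{(n)}$.

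The step I expect to be the main obstacle is the uniformity assertion: isolating from \cite{bayer2013mmp} the precise statement that the \emph{types} of the two boundary walls, and the moduli-theoretic datum attached to the Hilbert--Chow ones, are functions of the pair (algebraic Mukai lattice, Mukai vector) alone, so that a Hodge isometry of algebraic Mukai lattices fixing $(1,0,1-n)$ transports the whole configuration; and, relatedly, the bookkeeping in the degenerate case in which \emph{both} boundary walls are of Hilbert--Chow type. In that case one must check that $c\le 2$ still holds, which it does because two boundary walls sharing an adjacent chamber force the movable cone to be a single chamber, hence $X^{[n]}$ to be the only birational model and its Hilbert--Chow base to be unique.
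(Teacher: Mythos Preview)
Your argument is correct and reaches the same dichotomy, but by a somewhat different route than the paper. The paper proceeds purely through the arithmetic of $S_{d,n}$: fixing a generator $P(a\sqrt{s},b\sqrt{t})$ of the cyclic group $S_{d,n}/\pm1$, one checks that its powers alternate between the forms $(a'\sqrt{s},b'\sqrt{t})$ and $(a'\sqrt{d},b')$, so that (via \cite[Lemma~6.8]{yanagida2014bridgeland}) the only partner $X_j$ with $X_j^{[n]}$ birational to $X_i^{[n]}$ is $M_{X_i}(a^2s,abH,b^2t)$; this yields the bound $c\le2$ and the specific pair $\{s,t\}$ in a single stroke, with $|\cB|=N/2$ precisely when $\{s,t\}\ne\{1,d\}$. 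You instead obtain $c\le2$ geometrically from the two boundary rays of the rank-two movable cone, and defer the identification of $\{s,t\}$ (hence the decision between $N$ and $N/2$) to the generator of $S_{d,n}$ at the very end. Both arguments rest on the same uniformity step---that the wall configuration depends only on the abstract pair (algebraic Mukai lattice, Mukai vector)---which the paper dispatches by a back-reference to the opening of Section~\ref{subsec:bircounting}; so your ``main obstacle'' is already handled there and is not really an obstacle. Your coset/subgroup rephrasing is correct but unnecessary: once every $X_i$ is seen to have the same second pair $\{s,t\}$, the conclusion follows without invoking a group law on Fourier--Mukai partners.
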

\begin{proof}
Let $P(a \sqrt{s},b\sqrt{t}) \in S_{d,n}$ be a generator of $S_{d,n}/\pm 1$. Then, for any $P(x,y) \in S_{d,n}$,
$$
(x,y)=(a'\sqrt{s},b'\sqrt{t}) \text{ or } (a' \sqrt{d},b').
$$
Hence ${X_i}^{[n]}$ is birationally equivalent to ${X_j}^{[n]}$ if and only if $X_i=
M_{X_j}(a^2 s,abH,b^2 t)$. In particular, we have
\begin{equation}\label{eq:B}
|\cB|=
\begin{cases} N/2, & \{s,t \} \ne \{1,d\}\\
N, & \{s,t \} = \{1,d\}
\end{cases}
\end{equation}
using the same arguments as presented at the start of Section \ref{subsec:bircounting}.
\end{proof}

\begin{exa}
If $n=2$ then $S_{d,n} \simeq (\bbZ/2\bbZ)^{\oplus 2} \oplus \bbZ$ and the torsion subgroup is 
\[\left\{\pm \begin{pmatrix}1 & 0\\ 0 & 1 \end{pmatrix},\pm \begin{pmatrix}0 & 1\\ 1 & 0 \end{pmatrix}\right\}=\left\{\pm P(1,0),\pm P(0,1)\right\}.\] 
For a generator $P(a \sqrt{s},b\sqrt{t})$ of a cyclic subgroup,
we have a similar claim to \eqref{eq:B}.
\end{exa} 

\begin{exa}
If there are integers $p,q$ satisfying $dp^2-(n-1)q^2=\pm 1$, then $|B|=N$. In particular, if $n-1=dp^2 \pm 1$, then $|B|=N$.
\end{exa}

\begin{rmk}
Assume that $\sqrt{d(n-1)} \in \bbZ$. In this case, $p^2s(n-1)-q^2 t=\pm 1$ implies $\gcd(s (n-1),t)=1$. Hence $\sqrt{s (n-1)},\sqrt{t} \in \bbZ$. Then $p=0$ and $q^2=t=1$, or $q=0$ and $p^2=s=n-1=1$. Hence $M_X(p^2 s,pqH,q^2 t)=X$ in Theorem \ref{prop:Torelli}. 
\end{rmk}

Summing all of this up, we have the following:

\begin{prop}\label{lastprop}
Let $X$ be a K3 surface of degree $2d$ 
with Picard rank one and let $n>3$ be an integer.
\begin{enumerate}
\item[(1)] If $\sqrt{d(n-1)} \not \in \bbZ $, $\Mov(X^{[n]})$ is defined by $(0,0,1)$ and a primitive $v_1$, where $\vv_1$ satisfies one of the following.
\begin{enumerate}
\item If $\vv_1=(p^2 s,pqH,q^2 t)$ with $p^2 s (n-1)-q^2 t=\pm 2$, the primitivity of $\vv_1$ implies $\gcd(ps,q)=\gcd(p,t)=1$ and $P(pq \sqrt{d},p^2 s(n-1) \mp 1)$ is the generator of $S_{d,n}/\pm 1$. In this case, $\vv_1$ defines a 
Li--Gieseker--Uhlenbeck contraction, therefore $B=N$.
\item If $\vv_1=(r,cH,r(n-1))$ with $c^2 d-r^2 (n-1)=-1$, then the generator of $S_{d,n}/\pm 1$ is $P(r,c\sqrt{d})$. In this case, $\vv_1$ defines a Brill--Noether contraction, therefore $B=N$.
\item If $\vv_1=(p^2 s,pqH,q^2 t)$ with $p^2 s (n-1)-q^2 t=\pm 1$, then $P(p \sqrt{s},q \sqrt{t})$ is the generator of $S_{d,n}/\pm 1$. In this case, $\vv_1$ defines a Hilbert--Chow contraction, therefore $B=N$ if $\{s,t\}=\{1,n\}$ and $B=N/2$ otherwise.
\end{enumerate}
\item[(2)] Assume that $d(n-1)$ is a perfect square. Then $\Mov(X^{[n]})$ is defined by $(0,0,1)$ and a primitive $\vv_1$. In this case, $\vv_1$ defines a Lagrangian fibration and $B=N$.
\end{enumerate}
\end{prop}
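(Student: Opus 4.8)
The plan is to reduce the entire statement to the structure theory of the cyclic group $S_{d,n}/\pm 1$ together with the classification of divisorial contractions and Lagrangian fibrations on moduli spaces of sheaves on K3 surfaces due to Bayer--Macr\`i. The starting point is Theorem \ref{prop:Torelli} and the surrounding remarks: the movable cone $\Mov(X^{[n]})$ has exactly two extremal rays, one of which is the Hilbert--Chow ray corresponding to $(0,0,1)$ (equivalently the class of the diagonal divisor), and the other is spanned by a primitive isotropic or non-isotropic Mukai vector $\vv_1\in\H^*_\alg(X,\bbZ)=\langle(1,0,0),(0,H,0),(0,0,1)\rangle$. By \cite[Theorem 5.7, Proposition 13.1]{bayer2013mmp} the type of the wall attached to $\vv_1$ is governed by the pairing $\langle\vv_1,\vv_1\rangle$ and the pairing with the Mukai vector $(1,0,1-n)$ of the Hilbert scheme: the divisorial contractions are of LGU type when $\vv_1$ is isotropic with $\langle\vv_1,(1,0,1-n)\rangle=2$, of Brill--Noether type when $\langle\vv_1,\vv_1\rangle=-2$, of Hilbert--Chow type when $\vv_1$ is isotropic with $\langle\vv_1,(1,0,1-n)\rangle=1$, and the remaining boundary of the movable cone gives a Lagrangian fibration exactly when it is spanned by an isotropic class with square-zero orthogonal projection, which forces $d(n-1)$ to be a perfect square.

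First I would fix coordinates: write a general primitive class on the boundary ray as $\vv_1=(r,cH,x)$ and compute $\langle\vv_1,\vv_1\rangle=c^2H^2-2rx=2(c^2d-rx)$ and $\langle\vv_1,(1,0,1-n)\rangle=rx(n-1)\cdot$correction---more precisely $r(n-1)+x$ up to sign conventions. In case (1a), isotropy $c^2d=rx$ together with $\langle\vv_1,(1,0,1-n)\rangle=2$ plus Proposition \ref{prop:Partner}(i) forces $(r,c,x)=(p^2s,pq,q^2t)$ with $st=d$; the condition $\langle\vv_1,(1,0,1-n)\rangle=2$ becomes $p^2s(n-1)-q^2t=\pm2$, and primitivity of $\vv_1$ gives $\gcd(ps,q)=\gcd(p,t)=1$. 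One then checks directly that $P(pq\sqrt d,\,p^2s(n-1)\mp1)$ lies in $S_{d,n}$ (the square-$\pm1$ condition $y^2-(n-1)x^2=(p^2s(n-1)\mp1)^2-(n-1)p^2q^2d=1\mp2p^2s(n-1)+p^4s^2(n-1)^2-p^2q^2d(n-1)$, and using $q^2t=p^2s(n-1)\mp2$ and $st=d$ this collapses to $\pm1$), and that it generates $S_{d,n}/\pm1$ because any smaller element would yield a decomposition of $(1,0,1-n)$ into isotropic summands contradicting that the HC wall on the other side is the unique one. The conclusion $B=N$ then follows from \eqref{eq:B} since the second boundary is not of Hilbert--Chow type. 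Cases (1b) and (1c) are completely parallel: in (1b) the $-2$-class $\vv_1=(r,cH,r(n-1))$ gives $c^2d-r^2(n-1)=-1$ and one verifies $P(r,c\sqrt d)\in S_{d,n}$ is a generator; in (1c) the equation $p^2s(n-1)-q^2t=\pm1$ makes $P(p\sqrt s,q\sqrt t)$ an element of $S_{d,n}$, and one must show it is the generator, after which the dichotomy $B=N$ versus $B=N/2$ comes from reading off whether $\{s,t\}=\{1,n\}$ via the remark following Theorem \ref{prop:Torelli} (note $\langle(1,0,1-n),(p^2s,pqH,q^2t)\rangle=1$ with $\{s,t\}=\{1,d\}$ corresponds precisely to the other Hilbert scheme being $X^{[n]}$ itself after relabelling, so the relevant equality is $\{s,t\}=\{1,n\}$ once the sign is tracked). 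For part (2), when $d(n-1)$ is a perfect square, the remark in the excerpt shows the only isotropic $\vv_1$ with $\langle(1,0,1-n),\vv_1\rangle=1$ gives back $X$, so the non-HC boundary cannot be of Hilbert--Chow type; Bayer--Macr\`i's numerical criterion then identifies it as a Lagrangian fibration wall, and $B=N$ follows.

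The main obstacle I anticipate is the \emph{generation} statements: in each of (1a)--(1c) it is not enough to exhibit an element of $S_{d,n}/\pm1$; one must show it is a generator of this infinite cyclic group. The clean way to do this is via \cite[Lemma 6.8]{yanagida2014bridgeland}: the generator of $S_{d,n}/\pm1$ is exactly the matrix realising the ``first'' nontrivial way of writing $(1,0,1-n)$ as an ordered sum of two isotropic classes, equivalently the first nontrivial wall one crosses in the movable cone starting from the Gieseker chamber. Since $\Mov(X^{[n]})$ has only two extremal rays and the non-HC ray we have identified is extremal (it bounds the movable cone, not an interior wall), the associated $S_{d,n}$-element must be primitive, i.e. a generator; making this argument airtight requires invoking \cite[Prop. 13.1]{bayer2013mmp} to know that no further walls lie beyond the extremal ray. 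A secondary nuisance will be bookkeeping the signs and the two Pell equations (the $\{s,t\}\leftrightarrow\{t,s\}$ symmetry from Proposition \ref{prop:Partner}(iii)), but that is purely routine once the framework is set up. I would therefore structure the proof as: (i) recall the two-ray description of $\Mov$ and the Bayer--Macr\`i wall-type dictionary; (ii) treat each of the four cases by computing the two relevant Mukai pairings and matching with the dictionary; (iii) in each case produce the explicit $S_{d,n}$-element and invoke \cite[Lemma 6.8]{yanagida2014bridgeland} plus extremality to conclude it generates; (iv) read off $B$ from \eqref{eq:B}.
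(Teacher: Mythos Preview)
The paper gives no separate proof: the proposition is presented as a summary (``Summing all of this up'') of the preceding material---Theorem \ref{prop:Torelli}, the cyclic structure of $S_{d,n}/\pm1$ from \cite{yanagida2014bridgeland}, the proposition yielding \eqref{eq:B}, and the Bayer--Macr\`i wall dictionary. Your proposal correctly unpacks exactly these ingredients and your plan (i)--(iv) is the right skeleton, so you are aligned with what the paper intends.

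Two cautions. First, in (1c) the dichotomy should read $\{s,t\}=\{1,d\}$, matching \eqref{eq:B} and the constraint $st=d$ from Proposition \ref{prop:Partner}(i); the ``$\{1,n\}$'' in the stated proposition is almost certainly a misprint, and your attempt to justify it by ``tracking the sign'' does not work---do not try to prove a typo. Second, your generation argument via extremality is transparent only in case (1c), where the second boundary is itself Hilbert--Chow and so corresponds directly to an isotropic decomposition of $(1,0,1-n)$ as in \cite[Lemma 6.8]{yanagida2014bridgeland}. In (1a) and (1b) the second boundary is LGU or BN, so $\vv_1$ does not itself furnish such a decomposition; the element of $S_{d,n}$ you write down visibly has parameters $\{1,d\}$ (one coordinate is an integer, the other a multiple of $\sqrt{d}$), which is consistent with \eqref{eq:B} giving $B=N$, but showing it \emph{generates} requires relating it to the nearest Hilbert--Chow wall beyond the movable cone rather than to the extremal ray $\vv_1$ itself. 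This is implicit in \cite[\S6]{yanagida2014bridgeland}; you should spell it out rather than appeal to extremality alone.
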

\begin{rmk}
%
If we set $\vv=(1,0,1-n)$ then case (a) of Proposition \ref{lastprop} is governed by the conditions $(\vv_1,\vv)=\pm2$ and $\vv_1^2=0$, which amounts to finding solutions of the following Pell's equation:
\[x^2-d(n-1)y^2=1\] 
such that $(n-1)\mid x+1$. Similarly, for case (c) we have $(\vv_1,\vv)=\pm1$ and $\vv_1^2=0$, which gives rise to:
\[x^2-4d(n-1)y^2=1\] 
such that $2(n-1)\mid x+1$.
\end{rmk}

\begin{rmk}
In Proposition \ref{lastprop}, the assumption on $n$ is needed as, if $n=3$, then there may exist a Hilbert--Chow contraction with $\langle \vv_1,(1,0,-2) \rangle=\pm 2$.
\end{rmk}

\bibliographystyle{alpha}
\bibliography{ref}
\end{document}